\newcommand{\BB}{\mathbb}
\newcommand{\on}{\operatorname}
\newcommand{\Var}{\on{Var}}
\newcommand{\mc}{\mathcal}
\newcommand{\ce}{\mathfrak c}
\newcommand{\ve}{\varepsilon}
\newtheorem{thm}{Theorem}
\newtheorem{fact}[thm]{Fact}
\newtheorem{lem}[thm]{Lemma}
\newtheorem{prop}[thm]{Proposition}
\theoremstyle{definition}
\theoremstyle{remark}
\title[Nonseparable spaceability and strong algebrability]{Nonseparable spaceability and strong algebrability of sets of continuous singular functions}
\author{Marek Balcerzak}
\address{Institute of Mathematics,
         \L\'od\'z University of Technology, W\'olcza\'nska 215,
         93-005 \L\'od\'z,
         Poland}
\email{marek.balcerzak@p.lodz.pl}
\author{Artur Bartoszewicz}
\address{Institute of Mathematics,
         \L\'od\'z University of Technology, W\'olcza\'nska 215,
         93-005 \L\'od\'z,
         Poland}
\email{artur.bartoszewicz@p.lodz.pl}
\author{Ma{\l}gorzata Filipczak}
\address{Faculty of Mathematics and Computer Science,
         University of \L\'od\'z, Banacha 22,
         93-008 \l\'od\'z
         Poland}
\email{malfil@math.uni.lodz.pl}
\subjclass{15A03, 26A30, 26A45, 46B25, 46E25, 46J15}
\keywords{spaceability, strong algebrability, functions of bounded variation, singular functions}
\date{}
\begin{document}
\begin{abstract}
Let $\on{CBV}$ denote the Banach algebra of all continuous real-valued functions of bounded variation, defined in $[0,1]$.
We show that the set of strongly singular functions in $\on{CBV}$
is nonseparably spaceable. We also prove that certain families of singular functions constitute strongly \mbox{$\ce$-algebrable}
sets. The argument is based on a new general criterion of strong \mbox{$\ce$-algebrability}.
\end{abstract}

\maketitle
\section{Introduction}
In the last decade, much work was done in the study of subsets of vector spaces (topological vector spaces, normed spaces, Banach algebras, etc.)
with no linear structure given a priori. This research was earlier initiated by Gurariy \cite{G1}, \cite{G2} and then continued by several authors.
See for instance \cite{AGS}, \cite{ASS}, \cite{BG1}, \cite{BG2}, \cite{BG}, \cite{GKP}, \cite{JMS}.

Recall (see \cite{AGS}) that, for a topological vector space $V$, its subset $A$ is said to be:
\begin{itemize}
\item {\em lineable} if $A\cup\{ 0\}$ contains an infinite dimensional
vector subspace $W$ of $V$ (here the topological structure of $V$ is not required);
moreover, if $\on{dim}W=\kappa$ then $A$ is called {\em \mbox{$\kappa$-lineable}};
\item {\em spaceable} if $A\cup\{ 0\}$ contains an infinite dimensional
closed vector subspace $W$ of $V$;
moreover, if $W$ is nonseparable, we say that $A$ is {\em nonseparably spaceable}.
\end{itemize}

One aim of our paper is to reexamine the spaceability of some families of singular functions contained in the Banach algebra $\on{CBV}$
of all continuous functions from $[0,1]$ to $\BB R$ of bounded variation, endowed with the norm
$$||f||=|f(0)|+\Var(f)$$
where $\Var (f)=\Var _{[0,1]} f$ denotes the total variation of $f$ in $[0,1]$ and, in general, $\Var _{[a,b]}f$ denotes the variation
of $f$ in a subinterval $[a,b]$ of $[0,1]$.
The lineability and spaceability of certain subfamilies of $\on{CBV}$ were studied in \cite{BG1} and more recently, in \cite{BG2}.
Our main result going in this direction states that the set of strongly singular functions is nonseparably
spaceable (Theorem \ref{t1}). We heavily exploit a family of such functions known in the probability theory \cite{Bi}.

Another aim of our paper is to establish strong $\ce$-algebrability of some families of singular functions.
Here $\ce$ denotes the cardinality of $\BB R$ (continuum). Algebrability and strong algebrability are associated with algebras, the structures
richer than linear spaces.
The notion of strong algebrability for various special subfamilies of $\on{CBV}$, $C[0,1]$ and  $\BB R^{[0,1]}$
becomes interesting in light of some recent results obtained in this context. We propose a new general criterion of strong
$\ce$-algebrability (Proposition \ref{p1}) which is used in Theorems \ref{t2} and \ref{t4}.

Recall (see \cite{ASS}) that a subset $E$ of an algebra $\mc A$ is {\em algebrable} if $E\cup\{ 0\}$ contains an
infinitely generated subalgebra $\mc B$ of $\mc A$.
If $E$ is algebrable with the minimal set of generators of $\mc B$ of
cardinality $\kappa$, then $E$ is called $\kappa$-algebrable.

A strengthened notion of algebrability was introduced in \cite{BG}. Given an infinite cardinal $\kappa$ and a commutative algebra $\mc A$,
a subset $E$ of $\mc A$ is called
{\em strongly $\kappa$-algebrable} whenever there exists a set $X=\{ x_\alpha\colon \alpha<\kappa\}\subset E$ of free generators of
a subalgebra $\mc B\subset E\cup\{0\}$ (that is, the set $\widehat{X}$ of all elements of the form
$x^{k_1}_{\alpha_1}x_{\alpha_2}^{k_2}\dots x_{\alpha_n}^{k_n}$, with nonnegative integers $k_1,\dots k_n$ nonequal to $0$ simultaneously,
is linearly independent and all linear combinations of elements from $\widehat{X}$
are in $E\cup\{0\}$). A set $E\subset\mc A$ is called {\em strongly algebrable} if it is strongly $\kappa$-algebrable for an infinite $\kappa$,
and it is {\em densely strongly $\kappa$-algebrable} if it is strongly $\kappa$-algebrable and the respective free subalgebra is dense in $\mc A$,
provided that $\mc A$ is a Banach algebra.

Let $\lambda$ stand for Lebesgue measure in $\BB R$.
A continuous function $f\colon[a,b]\to\BB R$ of bounded variation is said to be {\em singular} whenever it is not constant
and $f'=0$, $\lambda$-almost everywhere. The classical Cantor function
(see e.g. \cite{DMRV}) is an example of nondecreasing singular function defined in $[0,1]$. Also strictly increasing
singular functions are known, see \cite{T} where a good bibliography on this topic is presented.
We will consider classes of singular functions inside which some rich algebraic structures can be inscribed.

Note that $\on{CBV}$ is a subspace of the Banach algebra $\on{BV}$ of real-valued functions of bounded variation on $[0,1]$,
endowed with the same norm.
It is known that $\on{BV}$ can be treated as the space of finite signed Borel measures on [0,1]
with the norm being the total variation of such a measure.
Then $\on{CBV}$ is associated with those Borel measures which vanish on singletons. Among such measures, there is an important class of Borel
probability measures described in full by continuous distribution functions.
In particular, we will be interested in the following class of probability measures $\{\mu_p\colon p\in(0,1/2)\}$.
Namely, $\mu_p$ is the distribution of the sum $X=\sum_{k=1}^\infty(1/2^k)X_k$ where $X_k$, $k\in\BB N$, is a sequence
of independent random variables with $\on{Pr}(X_k=0)=p$ and $\on{Pr}(X_k=1)=1-p$.
The distribution function $F_p(t):=\on{Pr}(X\le t)$, $t\in\BB R$, associated with $\mu_p$ has the following properties (see \cite[\S 31]{Bi}):
\begin{itemize}
\item[(i)] Consider a birational number $t=k/2^n$, for $n\in\BB N$ and $k\in\{ 0,\dots ,2^n-1\}$, and the terminating binary expansion
$t=\sum_{k=1}^n u_k/2^k$ with $u_k\in\{ 0,1\}$. Let $\ell(t)$ and $r(t)$ denote (respectively) the numbers of zeros and ones among $u_1,\dots ,u_n$.
Then
$$\mu_p\left(\left[t,t+\frac{1}{2^n}\right]\right)=F_p\left(t+\frac{1}{2^n}\right)-F_p(t)=p^{\ell(t)}(1-p)^{r(t)}.$$
It follows that
$$\mu_p\left(\left[t,t+\frac{1}{2^n}+\frac{1}{2^{n+1}}\right]\right)=p^{\ell(t)+1}(1-p)^{r(t)}$$
$$\mu_p\left(\left[t+\frac{1}{2^n}-\frac{1}{2^{n+1}},t+\frac{1}{2^n}\right]\right)=p^{\ell(t)}(1-p)^{r(t)+1}.$$
\item[(ii)] $F_p$ is continuous and strictly increasing on $[0,1]$, $F_p(x)=0$ for all $x\le 0$ and $F_p(x)=1$ for all $x\ge 1$.
\item[(iii)] If $x\in (0,1)$ and $F'_p(x)$ exists then $F'_p(x)=0$.
As $F_p$ is monotone, $F'_p$ exists $\lambda$-almost everywhere in $[0,1]$
and consequently, $F'_p=0$, $\lambda$-almost everywhere in $[0,1]$.
\item[(iv)]  For any distinct $p,q\in (0,1/2)$ there are disjoint Borel sets $B_p,B_q\subset [0,1]$ such that
$\mu_p(B_p)=1$, $\mu_q(B_q)=1$. In other words, the measures $\mu_p$ and $\mu_q$ have disjoint supports.
For the proof, see \cite[Example 31.3]{Bi}.
\end{itemize}
In the sequel, we will consider the functions $F_p$, $p\in (0,1/2)$, restricted to $[0,1]$. The following result belongs to mathematical folklore.
The analogue of its second part is valid for any two Borel probability measures on $[0,1]$ having disjoint supports.
\begin{fact}  \label{F1}
The space $\on{CBV}$ is nonseparable. This is witnessed by the condition $||F_p-F_q||=2$ for any distinct $p,q\in (0,1/2)$.
\end{fact}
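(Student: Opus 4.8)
The plan is to prove the norm identity $\|F_p-F_q\|=2$ for distinct $p,q\in(0,1/2)$ first, and then read off nonseparability. For the latter implication, observe that the family $\{F_p\colon p\in(0,1/2)\}$ has cardinality $\ce$, and if any two of its members lie at distance $2$, then the open balls of radius $1$ centred at the points $F_p$ are pairwise disjoint. A dense subset of $\on{CBV}$ would have to meet each of these uncountably many disjoint balls, and so could not be countable; hence $\on{CBV}$ is nonseparable and the stated condition witnesses this.

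It remains to compute $\|F_p-F_q\|$. Put $g=F_p-F_q$. By property (ii) we have $F_p(0)=F_q(0)=0$, so $g(0)=0$ and therefore $\|g\|=\Var(g)$. Each $F_p$ is nondecreasing with $F_p(0)=0$ and $F_p(1)=1$, whence $\Var(F_p)=\Var(F_q)=1$; the triangle inequality for the variation seminorm then gives the upper estimate $\Var(g)\le\Var(F_p)+\Var(F_q)=2$. The essential content of the statement is thus the reverse inequality $\Var(g)\ge 2$.

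For the lower bound I would pass to the measure side, invoking the identification of $\on{CBV}$ with the continuous-distribution Borel measures recalled above: the total variation of the function $g=F_p-F_q$ on $[0,1]$ coincides with the total-variation norm $|\mu_p-\mu_q|([0,1])$ of the associated signed measure $\mu_p-\mu_q$. By property (iv) the measures $\mu_p$ and $\mu_q$ are mutually singular, so their Jordan decomposition yields $|\mu_p-\mu_q|=\mu_p+\mu_q$; evaluating on $[0,1]$ gives $\mu_p([0,1])+\mu_q([0,1])=2$. Concretely, on the disjoint supports $B_p,B_q$ from (iv) the signed measure restricts to $\mu_p\ge 0$ on $B_p$ and to $-\mu_q\le 0$ on $B_q$, so its positive and negative parts each carry mass $1$. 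Since this argument uses only the disjointness of supports, it simultaneously establishes the announced analogue for arbitrary Borel probability measures on $[0,1]$ with disjoint supports.

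The only non-routine point is this lower bound $\Var(g)\ge 2$, which genuinely requires (iv): without the disjointness of supports one would obtain merely the useless reverse-triangle estimate $\Var(g)\ge|\Var(F_p)-\Var(F_q)|=0$. Should one wish to bypass the measure-function dictionary, the bound can also be reached directly: using outer regularity, enclose the bulk of $B_p$ in a finite disjoint union of intervals carrying $\mu_q$-mass below $\ve$, do the symmetric thing for $B_q$ with the two finite unions kept disjoint (possible since the supports are disjoint), and then evaluate the partition sum of $g$ over a common refinement; the increments over the $p$-intervals total nearly $1$ and those over the $q$-intervals nearly $-1$, so the partition sum exceeds $2-O(\ve)$, forcing $\Var(g)\ge 2$.
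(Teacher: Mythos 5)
Your proposal is correct, but its main argument takes a genuinely different route from the paper's. The paper proves the lower bound $\Var_{[0,1]}(F_p-F_q)\ge 2$ by an elementary $\ve$-argument: it picks closed sets $C_p\subset B_p$ and $C_q\subset B_q$ of $\mu_p$- resp.\ $\mu_q$-mass at least $1-\ve$, separates these disjoint closed sets by disjoint open sets $G_p\supset C_p$ and $G_q\supset C_q$, and sums the variation of $F_p-F_q$ over the connected components of $G_p$ and of $G_q$, using the triangle inequality for $\Var$ componentwise and letting $\ve\to 0$. You instead invoke the isometric dictionary between $\on{CBV}$-functions vanishing at $0$ and atomless signed Borel measures, so that $\Var_{[0,1]}(F_p-F_q)=|\mu_p-\mu_q|([0,1])$, and then obtain $|\mu_p-\mu_q|=\mu_p+\mu_q$ in one stroke from the Hahn--Jordan decomposition, since (iv) makes $\mu_p$ and $\mu_q$ mutually singular. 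Your route is shorter, gives the exact value $2$ immediately, and proves verbatim the paper's remark that the same holds for any two Borel probability measures with disjoint supports; the price is reliance on the folklore isometry $\Var(f)=|\nu_f|([0,1])$, which the paper states only informally and whose avoidance is exactly what makes the paper's proof self-contained. Two smaller remarks: your ``direct'' fallback sketch is essentially the paper's proof, but the parenthetical ``possible since the supports are disjoint'' glosses over the one genuine subtlety --- here $B_p$ and $B_q$ are both dense in $[0,1]$ (every dyadic interval carries positive $\mu_p$- and $\mu_q$-mass, and $B_p$, $B_q$ have full mass), so no disjoint systems of intervals can contain them entirely; one must either pass, as the paper does, to closed subsets of mass $\ge 1-\ve$, which can then be separated, or replace your two finite unions $U,V$ by $U\smi V$ and $V\smi U$, losing only $O(\ve)$ of mass. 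Finally, your disjoint-balls argument deducing nonseparability from $\|F_p-F_q\|=2$ is correct and usefully fills in a step the paper leaves implicit.
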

\begin{proof}
Consider any distinct $p,q\in (0,1/2)$. Pick $B_p$ and $B_q$ as in (iv). Clearly, $||F_p-F_q||=\Var_{[0,1]}(F_p-F_q)\le 2$.
Let $\ve\in(0,1/4)$.
Pick closed sets $C_p\subset B_p$ and $C_q\subset B_q$ such that $\mu_p(C_p)\ge 1-\ve$ and $\mu_q(C_q)\ge 1-\ve$.
Choose disjoint sets $G_p\supset C_p$ and $G_q\supset C_q$ that are open in $[0,1]$.
Let $(I_n)$ and $(J_n)$ stand for the sequences of all connected components of $G_p$ and $G_q$, respectively. Then
$\mu_p(G_p)\ge 1-\ve$, that is $\sum_n \Var_{\on{cl}(I_n)}F_p\ge 1-\ve$, and $\mu_p(G_q)\le\ve$, that is $\sum_n \Var_{\on{cl}(J_n)}F_p\le\ve$.
The analogous inequalities hold for measure $\mu_q$.
So,
$$1-\ve\le\sum_n \Var_{\on{cl}(I_n)}F_p=\sum_n \Var_{\on{cl}(I_n)}((F_p-F_q)+F_q)$$
$$\le \sum_n \Var_{\on{cl}(I_n)}(F_p-F_q)+\sum_n \Var_{\on{cl}(I_n)}F_q
\le \sum_n \Var_{\on{cl}(I_n)}(F_p-F_q) +\ve .$$
Consequently,
$$\sum_n \Var_{\on{cl}(I_n)}
(F_p-F_q)\ge 1-2\ve\text{ and analogously, }\sum_n \Var_{\on{cl}(J_n)}(F_p-F_q)\ge1-2\ve .$$
Hence we have
$$||F_p-F_q||=\Var_{[0,1]}(F_p-F_q)\ge\sum_n \Var_{\on{cl}(I_n)}(F_p-F_q)+\sum_n \Var_{\on{cl}(J_n)}(F_p-F_q)\ge 2-4\ve .$$
Letting $\ve\to 0$ we obtain the assertion.
\end{proof}

\section{Nonseparable spaceability of the set of strongly singular functions}

A singular function $f\in\on{CBV}$ will be called {\em strongly singular} whenever its restriction to every subinterval of $[0,1]$
is singular. In other words, $f\in\on{CBV}$ is strongly singular whenever $f'=0$ almost everywhere and $f$ is not constant in every interval.
Every function $F_p$, $p\in(0,1/2)$, considered in Section 1 is strongly singular.

For each strongly singular function $f$ and any subinterval $[a,b]\subset [0,1]$, $a<b$, we
have the following properties:
\begin{itemize}
\item[(I)] $f$ is nondifferentiable somewhere in $[a,b]$;
\item[(II)] $f$ is not absolutely continuous in $[a,b]$ since otherwise, $0=f(d)-f(c)=\int_c^d f'$ for all $c,d\in [a,b]$, $c<d$, (cf. \cite[Thm 4.14]{Go});
\item[(III)] $f$ is not Lipschitz in $[a,b]$ since every Lipschitz function is absolutely continuous in a given interval.
\end{itemize}

A main result of this section is the following.

\begin{thm} \label{t1}
The set of strongly singular functions in $\on{CBV}$ is nonseparably spaceable.
\end{thm}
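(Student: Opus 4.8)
The plan is to take $W$ to be the closed linear span in $\on{CBV}$ of the whole family $\{F_p\colon p\in(0,1/2)\}$ and to show that $W$ is the desired subspace, using as the essential tool the fact that the measures $\mu_p$ have pairwise disjoint supports (property (iv)).

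First I would record the crucial norm computation. Given finitely many distinct $p_1,\dots,p_n\in(0,1/2)$ and scalars $c_1,\dots,c_n$, the function $f=\sum_{i=1}^n c_iF_{p_i}$ induces the signed measure $df=\sum_i c_i\mu_{p_i}$. From (iv) I would first upgrade the pairwise disjointness of supports to a single family of pairwise disjoint Borel sets $B_1,\dots,B_n$ with $\mu_{p_i}(B_i)=1$, by intersecting the pairwise witnesses (set $B_i:=\bigcap_{j\neq i}B^{ij}$, where $B^{ij}$ carries full $\mu_{p_i}$-mass and is disjoint from the set carrying full $\mu_{p_j}$-mass). Since on $B_i$ the measure $df$ coincides with $c_i\mu_{p_i}$ and $df$ is concentrated on $\bigcup_iB_i$, the total variation measure is $|df|=\sum_i|c_i|\mu_{p_i}$, whence $\Var(f)=\sum_i|c_i|\,\mu_{p_i}([0,1])=\sum_i|c_i|$. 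As $F_p(0)=0$ we also get $f(0)=0$, so $\|f\|=\sum_i|c_i|$. This identity says exactly that the linear map $T\colon\ell^1((0,1/2))\to\on{CBV}$ sending the unit vector $e_p$ to $F_p$ is an isometry onto the linear span of $\{F_p\}$.

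Because $T$ is isometric and $\ell^1((0,1/2))$ is complete, its image $W:=T(\ell^1((0,1/2)))$ is complete, hence a closed subspace of $\on{CBV}$, and it coincides with the closed linear span of $\{F_p\}$. Consequently every element of $W$ has the form $f=\sum_p c_pF_p$ for a countably supported family $(c_p)\in\ell^1((0,1/2))$, the series converging absolutely in $\on{CBV}$. Nonseparability (and infinite dimensionality) is then immediate: $W$ contains the uncountable set $\{F_p\colon p\in(0,1/2)\}$, which by Fact \ref{F1} satisfies $\|F_p-F_q\|=2$ for distinct $p,q$, so $W$ admits an uncountable $2$-separated subset.

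It remains to check that every nonzero $f=\sum_p c_pF_p\in W$ is strongly singular. Only countably many coefficients, say $c_{p_1},c_{p_2},\dots$, are nonzero; applying the same intersection trick to this countable subfamily produces pairwise disjoint Borel sets $B_k$ carrying full $\mu_{p_k}$-mass, which (using (iii), i.e.\ $\mu_{p_k}\perp\lambda$) may be taken $\lambda$-null. Exactly as above, $|df|=\sum_k|c_k|\mu_{p_k}$, a measure concentrated on the $\lambda$-null set $\bigcup_kB_k$; hence $df\perp\lambda$ and $f'=0$ $\lambda$-almost everywhere. Finally, for any subinterval $[a,b]\sbq[0,1]$ with $a<b$, choose $k$ with $c_k\neq 0$; strict monotonicity of $F_{p_k}$ (property (ii)) gives $\mu_{p_k}([a,b])=F_{p_k}(b)-F_{p_k}(a)>0$, so $\Var_{[a,b]}f=|df|([a,b])\ge|c_k|\,\mu_{p_k}([a,b])>0$, i.e.\ $f$ is nonconstant on $[a,b]$. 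Thus $f$ is singular on every subinterval, hence strongly singular.

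The main obstacle is precisely the last preservation step: a priori an $\ell^1$-limit of strongly singular functions could become constant on some subinterval or acquire an absolutely continuous part. The disjoint-supports property (iv) is exactly what rules both out, since it forces the total variation measure to be additive over the family and thereby precludes any cancellation on a subinterval; the very same additivity yields the $\ell^1$-norm identity that simultaneously delivers nonseparability of $W$.
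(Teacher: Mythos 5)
Your proof is correct, but it takes a genuinely different route from the paper's. The paper works with the algebraic span $W=\on{span}\{F_p\}$ and proves three separate lemmas: a combinatorial induction on dyadic intervals (using property (i)) showing finite linear combinations are nonconstant on every subinterval; a derivative argument showing every $f\in\on{cl}(W)$ has $f'=0$ a.e.; and a delicate approximation argument (built on measure-preserving translations between dyadic subintervals) showing a limit function constant on some subinterval must vanish identically. You instead extract everything from property (iv): mutual singularity of the $\mu_p$ forces $\Var\bigl(\sum_i c_iF_{p_i}\bigr)=\sum_i|c_i|$, so the map $e_p\mapsto F_p$ is an isometry of $\ell^1((0,1/2))$ onto the closed span. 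This single identity simultaneously yields linear independence, closedness, nonseparability, and --- crucially --- an explicit description of every element of the closure as an absolutely convergent series $\sum_k c_kF_{p_k}$, from which $|df|=\sum_k|c_k|\mu_{p_k}$ gives both $df\perp\lambda$ (hence $f'=0$ a.e.) and $\Var_{[a,b]}f\ge|c_k|\,\mu_{p_k}([a,b])>0$ on every subinterval. What your approach buys is a substantial shortening (Lemmas \ref{L1}, \ref{L3} and \ref{L4} become unnecessary) and the stronger conclusion that the subspace is isometric to $\ell^1(\ce)$; what it costs is a heavier reliance on the identification of $\on{CBV}$ with the non-atomic signed Borel measures, in particular on $\Var_{[a,b]}(f)=|df|([a,b])$ and on the correspondence between norm convergence in $\on{CBV}$ and total-variation convergence of measures. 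The paper only states this identification informally, and it is standard for continuous BV functions, so this is a presentational dependency rather than a gap; if you were writing this up you should state and reference that correspondence explicitly, and also spell out (as you do) why pairwise disjointness of supports upgrades to a single pairwise disjoint family and why each $\mu_p\perp\lambda$ follows from (iii).
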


An interesting example of nonseparably spaceable subset of $\on{BV}$ has been established recently in \cite[Thm 3.1]{GKP}.
Our result is an essential strengthening of \cite[Thm 4.1]{BG2} where it was proved that the set of nonabsolutely continuous
functions in $\on{CBV}$ is spaceable.

The proof of Theorem \ref{t1} will be divided into several lemmas.
Let $W=\on{span}\{ F_p\colon p\in(0,1/2)\}$, that is, $W$ denotes the linear subspace of $\on{CBV}$ generated
by all functions $F_p$, $p\in (0,1/2)$.

\begin{lem} \label{L1}
Each function from $W$ is not constant in every subinterval $I$ of $[0,1]$.
\end{lem}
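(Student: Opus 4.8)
The plan is to pass from functions to the signed Borel measures they induce and to exploit the mutual singularity recorded in property~(iv). Fix a nonzero $g\in W$; after collecting terms we may write $g=\sum_{j=1}^n c_jF_{p_j}$ with $p_1,\dots,p_n\in(0,1/2)$ pairwise distinct and every $c_j\neq 0$. Suppose, towards a contradiction, that $g$ is constant on some subinterval $[a,b]\subseteq[0,1]$ with $a<b$. I would introduce the finite signed Borel measure $\nu=\sum_{j=1}^n c_j\mu_{p_j}$. Since each $F_{p_j}$ is continuous, $\mu_{p_j}((a,t])=F_{p_j}(t)-F_{p_j}(a)$ for $t\in[a,b]$, so $\nu((a,t])=g(t)-g(a)$. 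Constancy of $g$ on $[a,b]$ therefore forces $\nu((a,t])=0$ for every $t\in[a,b]$.

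First I would upgrade this to the statement that $\nu$ annihilates every Borel subset of $[a,b]$. The half-open subintervals $(c,d]\subseteq(a,b]$ form a $\pi$-system generating the Borel $\sigma$-algebra of $(a,b]$, and $\nu((c,d])=\nu((a,d])-\nu((a,c])=0$ on all of them; by uniqueness of finite signed measures determined on a generating $\pi$-system, $\nu$ vanishes on every Borel subset of $(a,b]$, and adding the $\nu$-null singleton $\{a\}$ gives $\nu(E)=0$ for all Borel $E\subseteq[a,b]$.

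Next I would separate the summands using property~(iv). Pairwise singularity yields, for each fixed $j$, a Borel set $B_j\subseteq[0,1]$ with $\mu_{p_j}(B_j)=1$ and $\mu_{p_i}(B_j)=0$ for $i\neq j$ (intersect the sets carrying $\mu_{p_j}$ away from each $\mu_{p_i}$, $i\neq j$). Then $\nu(B_j\cap[a,b])=c_j\mu_{p_j}(B_j\cap[a,b])=c_j\mu_{p_j}([a,b])$, because the cross terms vanish and $\mu_{p_j}$ is concentrated on $B_j$. Since $\nu$ annihilates $[a,b]$, we obtain $c_j\mu_{p_j}([a,b])=0$; but $c_j\neq 0$, while the strict monotonicity in property~(ii) gives $\mu_{p_j}([a,b])=F_{p_j}(b)-F_{p_j}(a)>0$, a contradiction. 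Hence $g$ is not constant on any subinterval $I$ of $[0,1]$.

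The main obstacle is the disentangling step: the hypothesis only says that the single combination $\nu$ vanishes on $[a,b]$, and one must convert this into the vanishing of each individual coefficient. This is exactly where mutual singularity is essential, and it is worth observing that the conclusion would fail for measures with overlapping supports (a nontrivial combination of two absolutely continuous distribution functions is easily constant on a subinterval). The subsidiary technical point---promoting vanishing on subintervals to vanishing on all Borel subsets---is routine measure uniqueness, but it should be recorded explicitly, since the separation argument is carried out on the Borel sets $B_j$ rather than on intervals.
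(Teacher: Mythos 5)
Your proof is correct, but it follows a genuinely different route from the paper's. The paper argues by induction on the number $k$ of summands: it proves the stronger statement that $\sum_{i=1}^k a_i\mu_{p_i}(I)\neq 0$ for every dyadic interval $I=[j/2^n,(j+1)/2^n]$, using the self-similarity relation $\mu_{p_i}(J)=p_i\,\mu_{p_i}(I)$ for the left half $J$ of $I$ (the final part of property (i)) to convert a vanishing $(k+1)$-term combination on $I$ into a vanishing $k$-term combination with coefficients $a_i(p_i-p_{k+1})$, contradicting the induction hypothesis. You instead pass to the signed measure $\nu=\sum_j c_j\mu_{p_j}$, upgrade $\nu((a,t])=0$ to $\nu(E)=0$ for all Borel $E\sbq[a,b]$ via the standard $\pi$-system uniqueness argument (applied, say, to the Jordan parts $\nu^{+},\nu^{-}$), and then use the mutual singularity in property (iv) to test $\nu$ against sets $B_j$ carrying $\mu_{p_j}$ and null for the other measures, extracting $c_j\mu_{p_j}([a,b])=0$ and concluding via the strict monotonicity in property (ii). Each step you describe checks out, including the construction of $B_j$ by intersecting finitely many full-measure sets and the identity $\mu_{p_j}(B_j\cap[a,b])=\mu_{p_j}([a,b])$. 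What the two approaches buy: the paper's induction is entirely elementary, needs only the combinatorial property (i), and stays within dyadic intervals; your argument is more conceptual and immediately generalizes to any finite family of pairwise mutually singular continuous probability measures with full support on $[0,1]$, at the price of invoking property (iv) (cited from Billingsley without proof) and a measure-uniqueness theorem. One cosmetic point you handled correctly and should keep explicit: the lemma as literally stated fails for the zero function, so restricting to nonzero $g\in W$ (as the paper implicitly does by writing $f=\sum a_iF_{p_i}$ with all $a_i\neq 0$) is the intended reading.
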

\begin{proof}
By induction with respect to $k\in\BB N$, we will show a more general condition stating that
for any $0<p_1<\dots <p_k<1/2$ and $a_i\neq 0$ with $i=1,\dots ,k$, and for every interval
$I$  of the form $I=[(j/2^n,(j+1)/2^n]$ with $n\in\BB N$, $j=0,\dots ,2^n-1$, we have
$$\sum_{i=1}^k a_i\mu_{p_i}(I)\neq 0.$$
Suppose that we have proved this statement and suppose that there exists $f\in W$ which is constant
in some subinterval of $[0,1]$. Then $f$ is of the form
$$
f=\sum_{i=1}^k a_iF_{p_i}\text{ where }0<p_1<\dots <p_k<\frac{1}{2}\text{ and }a_i\neq 0\text{ with }i=1,\dots ,k.
$$
We may assume that $f$ is constant in an interval $I$ of the form as above. Then we get $\sum_{i=1}^k a_i\mu_{p_i}(I)=0$
which is impossible.

To start the induction, observe that our statement
for $k=1$ is obvious. Assume that this is true for a number $k\in\BB N$. Consider
$\sum_{i=1}^{k+1} a_i\mu_{p_i}$ where $0<p_1<\dots <p_{k+1}<{1}/{2}$ and $a_i\neq 0$ with $i=1,\dots ,k+1$.
Fix an interval $I=[j/2^n,(j+1)/2^n]$ with $n\in\BB N$ and $j=0,\dots ,2^n-1$.
Suppose that
$\sum_{i=1}^{k+1}a_i\mu_{p_i}(I)=0$.
Consider $J=[(2j)/2^{n+1},(2j+1)/2^{n+1}]$, the left half of $I$. We then have $\mu_{p_i}(J)=p_i\mu_{p_i}(I)$
for $i=1,\dots ,k+1$ (see property (i)). Using this, we obtain
$$\sum_{i=1}^{k+1}a_i\mu_{p_i}(J)=\sum_{i=1}^{k+1}a_ip_i\mu_{p_i}(I)
=p_{k+1}\sum_{i=1}^{k+1}a_i\mu_{p_i}(I)+\sum_{i=1}^{k}a_i(p_i-p_{k+1})\mu_{p_i}(I)$$
$$=\sum_{i=1}^{k}a_i(p_i-p_{k+1})\mu_{p_i}(I).
$$
Since $\sum_{i=1}^{k}a_i(p_i-p_{k+1})\mu_{p_i}(I)\neq 0$  by the induction hypothesis, we obtain
a contradiction.
\end{proof}
\begin{lem} \label{L2}
If $f$ belongs to the closure $\on{cl}(W)$ of $W$ in $\on{CBV}$ then $f'=0$ almost everywhere in $[0,1]$.
\end{lem}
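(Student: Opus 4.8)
The plan is to reduce the whole statement to the single classical inequality
$$\int_0^1 |h'|\,d\lambda \;\le\; \Var(h),\qquad h\in\on{BV},$$
which holds because $\int_0^1|h'|\,d\lambda$ is precisely the total variation of the absolutely continuous part of the Lebesgue--Stieltjes measure of $h$ and therefore cannot exceed the full variation $\Var(h)$ (one proves it at once from the Jordan decomposition $h=h_1-h_2$ into monotone parts with $\Var(h)=\Var(h_1)+\Var(h_2)$, using $|h'|\le h_1'+h_2'$ and $\int_0^1 h_i'\le\Var(h_i)$). First I would record that every element of $W$ has vanishing derivative almost everywhere: a generic $g\in W$ is a finite combination $g=\sum_{i=1}^k a_i F_{p_i}$, and since each $F_{p_i}$ satisfies $F_{p_i}'=0$ $\lambda$-a.e.\ by property (iii), so does $g$.

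Next, given $f\in\on{cl}(W)$, I would choose a sequence $(g_n)$ in $W$ with $\|g_n-f\|\to 0$. By the definition of the norm this forces $\Var(f-g_n)\le\|f-g_n\|\to 0$. Both $f$ and $g_n$ lie in $\on{CBV}$, hence are differentiable $\lambda$-a.e., so $f-g_n$ is as well, with $(f-g_n)'=f'-g_n'=f'$ almost everywhere, the last equality because $g_n'=0$ a.e. Applying the inequality above to $h=f-g_n$ then yields
$$\int_0^1 |f'|\,d\lambda=\int_0^1 |(f-g_n)'|\,d\lambda\le\Var(f-g_n)\longrightarrow 0.$$
Therefore $\int_0^1|f'|\,d\lambda=0$, and since $|f'|\ge 0$ we conclude $f'=0$ $\lambda$-almost everywhere, which is exactly the assertion.

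The only nontrivial ingredient is the inequality $\int|h'|\le\Var(h)$; everything else is formal, so I do not expect a serious obstacle. One could also phrase the argument measure-theoretically: identifying $h\in\on{CBV}$ with its atomless Lebesgue--Stieltjes measure $\mu_h$, the variation part of the norm becomes the total-variation norm $\|\mu_h\|$, the assignment $\mu\mapsto\mu^{ac}$ is a norm-one projection since $\|\mu\|=\|\mu^{ac}\|+\|\mu^{s}\|$ for the Lebesgue decomposition, and each $\mu_{g_n}$ is singular; continuity of this projection then gives $\mu_f^{ac}=0$, i.e.\ $f'=0$ a.e. I expect the direct computation to be the shortest route and would present that one.
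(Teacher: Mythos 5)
Your proof is correct and follows essentially the same route as the paper: both arguments rest on the classical inequality $\int_0^1|h'|\,d\lambda\le\Var(h)$ applied to $h=f-g_n$, together with the observation that every element of $W$ has vanishing derivative almost everywhere. The only difference is presentational --- the paper argues by contradiction via the sets $E_k=\{x:|f'(x)|>1/k\}$, whereas you conclude $\int_0^1|f'|\,d\lambda=0$ directly, which is a slightly cleaner way to write the same argument.
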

\begin{proof}
Assume that $f_n\to f$ in the norm of $\on{CBV}$, for some sequence $(f_n)$ of functions from $W$.
For each $n\in\BB N$, let $D_n=\{x\in [0,1]\colon f'_n(x)=0\}$. Then $\lambda(D_n)=1$.
Since $f\in\on{CBV}$, the derivative
$f'$ exists almost everywhere in $[0,1]$. Suppose that $|f'|>0$ in a set $E$ of positive Lebesgue measure.
Then there exists $k\in\BB N$ such that $\lambda(E_k)=\alpha>0$ where $E_k=\{ x\in E\colon |f'(x)|>1/k\}$.
It follows that $|(f-f_n)'(x)|>1/k$ for all $x\in E_k\cap D_n$ and $n\in\BB N$.
Now, we have (for the first inequality, see \cite[Thm 224 I]{Fr})
$$\Var _{[0,1]}(f-f_n)\ge\int_{[0,1]}|(f-f_n)'|\ge\int_{E_k\cap D_n}|(f-f_n)'|\ge\frac{\alpha}{k}$$
for all $n\in\BB N$, which contradicts $||f_n-f||\to 0$.
\end{proof}
\begin{lem} \label{L3}
Consider arbitrary birational numbers $t_0={i_0}/{2^{n_0}}$ and $t_1={i_1}/{2^{n_1}}$ from $[0,1)$ such that
$n_1\ge n_0$, $\ell(t_1)\ge\ell(t_0)$ and $r(t_1)\ge r(t_0)$ (see property {\em (i)}). Put $I_0=[t_0,t_0+1/2^{n_0}]$ and
$I_1=[t_1,t_1+1/2^{n_1}]$. Then there exists a subinterval $J=[j/2^{n_1},(j+1)/2^{n_1+1}]$ (with $j\in\BB N$) of $I_0$ such that
$\mu_p(J)=\mu_p(I_1)$ for each $p\in(0,1/2)$. Moreover, for any real numbers $\alpha ,\beta\in I_1$,
$\alpha <\beta$, there exists a subinterval $[\alpha_1,\beta_1]$ of $I_0$ such that
$\mu_p([\alpha_1,\beta_1])=\mu_p([\alpha,\beta])$ for each $p\in(0,1/2)$.
\end{lem}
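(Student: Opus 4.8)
The plan is to exploit the self-similarity of the measures $\mu_p$ recorded in property (i): for a dyadic interval $[t,t+1/2^n]$ the value $\mu_p([t,t+1/2^n])=p^{\ell(t)}(1-p)^{r(t)}$ depends only on the numbers $\ell(t),r(t)$ of zeros and ones in the terminating $n$-digit binary expansion of its left endpoint. To prove the first assertion I would search for a left endpoint $s=j/2^{n_1}$ whose $n_1$-digit expansion carries exactly $\ell(t_1)$ zeros and $r(t_1)$ ones; property (i) would then force $\mu_p([s,s+1/2^{n_1}])=p^{\ell(t_1)}(1-p)^{r(t_1)}=\mu_p(I_1)$ for every $p\in(0,1/2)$. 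I would build $s$ by copying the first $n_0$ binary digits of $t_0$ (so that $[s,s+1/2^{n_1}]\subset I_0$, the suffix digits contributing at most $1/2^{n_0}-1/2^{n_1}$ beyond $t_0$) and then placing in the $n_1-n_0$ free positions exactly $\ell(t_1)-\ell(t_0)$ zeros and $r(t_1)-r(t_0)$ ones. The hypotheses $\ell(t_1)\ge\ell(t_0)$ and $r(t_1)\ge r(t_0)$ make these counts nonnegative, and they sum to $(\ell(t_1)+r(t_1))-(\ell(t_0)+r(t_0))=n_1-n_0$, precisely the number of free positions, so the assignment is legitimate and yields $\ell(s)=\ell(t_1)$, $r(s)=r(t_1)$, as needed; this $[s,s+1/2^{n_1}]$ is the desired $J$.

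For the second assertion I would transport $[\alpha,\beta]\subset I_1$ into $J$ by the translation $\psi(x)=x+(s-t_1)$, which maps $I_1$ bijectively onto $J$ because both intervals have length $1/2^{n_1}$. The crucial claim is that $\psi$ pushes $\mu_p|_{I_1}$ forward to $\mu_p|_{J}$. It suffices to verify this on dyadic subintervals: a dyadic subinterval of $I_1$ of rank $m\ge n_1$ has an expansion consisting of the $n_1$ digits of $t_1$ followed by some suffix $w$, while its image under $\psi$ is the dyadic subinterval of $J$ whose expansion consists of the $n_1$ digits of $s$ followed by the same suffix $w$; since the prefixes of $t_1$ and $s$ contain the same numbers of zeros and ones, property (i) gives the two subintervals equal $\mu_p$-measure. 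As the dyadic subintervals form a $\pi$-system generating the Borel subsets of $I_1$ and the two finite measures agree on it, they agree on all Borel sets. Setting $[\alpha_1,\beta_1]=\psi([\alpha,\beta])=[\alpha+(s-t_1),\beta+(s-t_1)]\subset J\subset I_0$ then gives $\mu_p([\alpha_1,\beta_1])=\mu_p([\alpha,\beta])$ for every $p$, the endpoints being immaterial since each $\mu_p$ is nonatomic.

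The combinatorial heart of the argument, that the deficits $\ell(t_1)-\ell(t_0)$ and $r(t_1)-r(t_0)$ fit exactly into the $n_1-n_0$ vacant binary positions, is immediate from the three hypotheses, so I do not expect it to cause trouble. The points that require care are instead the bookkeeping with the terminating binary expansions (checking that copying the prefix of $t_0$ really yields $J\subset I_0$, and that $\psi$ preserves the suffix beyond position $n_1$) together with the measure-theoretic extension step, where one invokes the uniqueness of finite measures agreeing on the generating $\pi$-system of dyadic subintervals to pass from dyadic intervals to arbitrary Borel subsets of $I_1$. This extension is the only genuinely nonelementary ingredient, and it is entirely standard.
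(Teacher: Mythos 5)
Your proposal is correct and follows essentially the same route as the paper: you build a dyadic subinterval $J\subset I_0$ of length $1/2^{n_1}$ whose left endpoint has the same counts of zeros and ones as $t_1$ (the paper realizes the same $J$ by $m$ left-halvings followed by $k$ right-halvings, which is just one particular placement of the extra digits), and you then transport $[\alpha,\beta]$ by the translation $x\mapsto x+(s-t_1)$ exactly as the paper does with $x=t_2-t_1$. The only cosmetic difference is in the final extension step, where the paper decomposes $[\alpha,\beta]$ into countably many dyadic intervals with disjoint interiors and uses countable additivity together with the nonatomicity of $\mu_p$, while you invoke uniqueness of finite measures agreeing on a generating $\pi$-system; both are standard and equivalent here.
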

\begin{proof} Note that $n_0=\ell(t_0)+r(t_0)$ and $n_1=\ell(t_1)+r(t_1)$.
Let $m=\ell(t_1)-\ell(t_0)$ and $k=r(t_1)-r(t_0)$.
We make $m+k$ divisions into halves of consecutive intervals. We start from $I_0$, choosing left halves $m$ times
and then choosing right halves $k$ times. After that we obtain the interval
$$J=\left[t_0+\frac{1}{2^{n_0+m}}-\frac{1}{2^{n_0+m+k}},t_0+\frac{1}{2^{n_0+m}}\right].$$
Then using several times the final part of property (i), we have
$$\mu_p(J)=p^{\ell(t_0)+m}(1-p)^{r(t_0)+k}=p^{\ell(t_1)}(1-p)^{r(t_1)}=\mu_p(I_1)$$
for each $p\in(0,1/2)$, as desired.

To prove the second assertion, let $t_2=\min J$ and note that $\ell(t_2)=\ell(t_1)$, $r(t_2)=r(t_1)$.
Let $x=t_2-t_1$ and consider a subinterval $I$ of $I_1$ of the form
\begin{equation} \label{E3}
\left[\frac{i}{2^n},\frac{i+1}{2^n}\right]\text{ for }n\ge n_1\text{ and }i\in\{0,\dots ,2^n-1\}.
\end{equation}
Denote $I+x=\{t+x\colon t\in I\}$. Then $I+x\subset J$. Observe that $\min I$ and $\min(I+x)$ have the same numbers of zeros and ones
in their terminating binary expansions. So by property (i) we have
\begin{equation} \label{E4}
\mu_p(I+x)=\mu_p(I) \text{ for each } p\in\left(0,\frac{1}{2}\right) .
\end{equation}
Now, let $\alpha ,\beta\in I_1$,
$\alpha <\beta$. Then $[\alpha ,\beta]$ can be expressed as a countable union of
intervals of the form (\ref{E3}) with pairwise disjoint interiors. Since every measure $\mu_p$ vanishes on singletons,
from (\ref{E4}) it follows that $\mu_p([\alpha,\beta]+x)=\mu_p([\alpha,\beta])$ for each $p\in(0,1/2)$.
Thus we put $\alpha_1=\alpha+x$, $\beta_1=\beta+x$ and we obtain the assertion.
\end{proof}

\begin{lem} \label{L4}
If $f\in\on{cl}(W)$ is constant in some subinterval of $[0,1]$ then $f$
is equal to $0$ in $[0,1]$.
\end{lem}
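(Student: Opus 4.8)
The plan is to reduce the claim to showing that $f$ is constant on all of $[0,1]$, and then invoke the normalization $f(0)=0$. Indeed, every $g\in W$ satisfies $g(0)=\sum_i a_iF_{p_i}(0)=0$, since $F_p(0)=0$ for each $p$ by property (ii) and continuity; and because $\sup_x|h(x)|\le|h(0)|+\Var(h)=\|h\|$, norm convergence in $\on{CBV}$ implies uniform (hence pointwise) convergence, so passing to the limit gives $f(0)=0$ for every $f\in\on{cl}(W)$. Thus once $f$ is known to be constant it must vanish identically. The first technical point is to transfer the increment-matching of Lemma \ref{L3} from $W$ to $\on{cl}(W)$. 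For $g=\sum_i a_iF_{p_i}\in W$ and any $\alpha<\beta$ one has $g(\beta)-g(\alpha)=\sum_i a_i\mu_{p_i}([\alpha,\beta])$, using that each $\mu_p$ is atomless. Hence whenever two intervals satisfy $\mu_p([\alpha_1,\beta_1])=\mu_p([\alpha,\beta])$ for all $p$, every $g\in W$ has equal increments over them, and by pointwise convergence the same equality of increments holds for every $f\in\on{cl}(W)$.

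Now suppose $f\in\on{cl}(W)$ is constant on a subinterval $[c,d]$ with $c<d$, and fix inside it a dyadic interval $I_0=[t_0,t_0+1/2^{n_0}]$, writing $\ell_0=\ell(t_0)$, $r_0=r(t_0)$. The key step is to propagate constancy to enough other intervals. Given a dyadic target $I_1=[t_1,t_1+1/2^{n_1}]$ with $n_1\ge n_0$, $\ell(t_1)\ge\ell_0$ and $r(t_1)\ge r_0$, the second part of Lemma \ref{L3} matches each subinterval $[\alpha,\beta]\sbq I_1$ with some $[\alpha_1,\beta_1]\sbq I_0$ of equal $\mu_p$-measures; combined with the previous paragraph and the constancy of $f$ on $I_0$, this yields $f(\beta)-f(\alpha)=f(\beta_1)-f(\alpha_1)=0$, so $f$ is constant on $I_1$. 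The point is that such targets exhaust a neighborhood of every non-dyadic point: if $x\in(0,1)$ is not a dyadic rational, its binary expansion has infinitely many zeros and infinitely many ones, so for all large $n$ its length-$n$ truncation $t_1$ has $\ell(t_1)\ge\ell_0$ and $r(t_1)\ge r_0$, while the level-$n$ dyadic interval containing $x$ contains $x$ in its interior and is reachable. Consequently $f$ is locally constant on an open set $U\sbq[0,1]$ containing every non-dyadic point, so $[0,1]\smi U$ lies in the countable set of dyadic rationals.

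Finally I would conclude by a cardinality argument. Writing $U$ as the countable disjoint union of its components, $f$ is constant on each (a locally constant function on an interval is constant), so $f(U)$ is countable; and $f([0,1]\smi U)$ is countable because its domain is. Hence $f([0,1])$ is countable. But $f$ is continuous on the connected compact set $[0,1]$, so $f([0,1])$ is a closed interval, which is countable only if it is a single point. Therefore $f$ is constant, and $f\equiv 0$ by the normalization above. I expect the main obstacle to be exactly this last passage from ``constant on a dense open set'' to ``globally constant'': it genuinely uses that the exceptional set is countable, the cautionary contrast being the Cantor function, which is locally constant off an uncountable measure-zero set yet not constant. The directionality of Lemma \ref{L3} — one may only transport constancy to intervals with at least as many zeros and ones — is harmless here, since the prefixes of any non-dyadic point eventually accumulate arbitrarily many of both.
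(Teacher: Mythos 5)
Your proof is correct, but it takes a genuinely different route from the paper's. Both arguments run on the same fuel, namely Lemma~\ref{L3}'s measure-matching together with the identity $g(\beta)-g(\alpha)=\sum_i a_i\mu_{p_i}([\alpha,\beta])$ for $g=\sum_i a_iF_{p_i}\in W$, but they deploy it differently. The paper argues by contradiction: it fixes a maximal dyadic interval $I_0$ of constancy, constructs a dyadic interval $I_1$ satisfying the $\ell,r$ inequalities on which $f$ is \emph{not} constant (this requires a somewhat delicate choice of $z=\max\,(f|_{[a,b]})^{-1}[\{f(a)\}]$ and a case split on whether $z$ is birational), and then approximates $f$ by $g\in W$ with $\|g-f\|<\ve/4$ to turn the exact increment identity for $g$ into a numerical contradiction $\ve/2<\ve/2$. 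You instead note that the increment identity passes to the limit (since the sup norm is dominated by the $\on{CBV}$ norm), so it holds for $f\in\on{cl}(W)$ itself; you then propagate constancy from $I_0$ \emph{outward} to every dyadic interval reachable under Lemma~\ref{L3}, observe that these cover all non-dyadic points (whose expansions contain infinitely many zeros and ones, so their length-$n$ prefixes eventually dominate $\ell(t_0)$ and $r(t_0)$), and finish with a connectedness--cardinality argument: a continuous function with countable image on $[0,1]$ is constant, and $f(0)=0$ forces $f\equiv 0$. Your route avoids both the $\ve/4$ bookkeeping and the fiddly construction of $I_1$, at the cost of the final topological step, which you correctly identify as the crux and correctly justify (the exceptional set is countable, in contrast with the Cantor function). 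All supporting details check out, so this is a valid and arguably cleaner alternative proof.
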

\begin{proof}
Let $f\in\on{cl}(W)$ be constant in some subinterval of $[0,1]$. If $f$ is constant in $[0,1]$ then
since $f(0)=0$, we have $f=0$ in $[0,1]$ which yields the assertion. So, suppose that $f$ is not constant in $[0,1]$.
Pick a subinterval $I_0=[i_0/2^{n_0}, (i_0+1)/2^{n_0}]$ of $[0,1]$ such that $f|_{I_0}$ is constant and $I_0$ is the longest
among such subintervals of $[0,1]$. Let $f|_{I_0}=c$.

Since $f$ is not constant in $[0,1]$, we can choose $I=[i/2^n,(i+1)/2^n]$ such
that $I\subset [0,1]$, $n\ge n_0$ and $f|_I$ is not constant. We are going to find a subinterval $I_1=[i_1/2^{n_1},(i_1+1)/2^{n_1}]$
of $[0,1]$ such that $n_1\ge n_0$, $f|_{I_1}$ is not constant, and additionally
\begin{equation}\label{E5}
\ell(i_1/2^{n_1})\ge\ell(i_0/2^{n_0}),\quad r(i_1/2^{n_1})\ge r(i_0/2^{n_0}).
\end{equation}
If the interval $I$ staisfies (\ref{E5}), put $I_1=I$. Otherwise, observe that $n\ge n_0$
implies that for $i_1=i$ and $n_1=n$ at most one of
inequalities in (\ref{E5}) can fail. Assume, for instance, that $\ell(i/2^n)<\ell(i_0/2^{n_0})$.
(The case $r(i/2^n)<r(i_0/2^{n_0})$ is similar.) We will find a subinterval
$J$ of $I$ which can be taken as $I_1$. Namely, since $f|_I$ is not constant, pick $a,b\in I$, $a<b$, such that $f(a)\neq f(b)$.
The set $A=(f|_{[a,b]})^{-1}[\{f(a)\}]$ is compact and define $z=\max A$. Then $f$ cannot be constant in any interval
$[z,z+\delta]$ with $z+\delta<b$ and $\delta>0$. If $z$ is birational,
pick $m\in\BB N$ such that $z+1/2^m<b$ and put $J=[z,z+1/2^m]$. If $z$ is not birational, consider its binary expansion $z=\sum_j z_j/2^j$ and
pick $k\in\BB N$ so large that for
$z'=\sum_{j=1}^k z_j/2^j$ we have $a\le z'$ and $z'+1/2^{k+1}<b$. Then put $J=[z',z'+1/2^{k+1}]$.

Having $I_1$ defined, pick $x,y\in I_1$, $x<y$, such that $f(x)\neq f(y)$ and let $\ve=|f(x)-f(y)|$.
Since $f\in\on{cl}(W)$, choose $g\in W$ such that $||g-f||<\ve/4$. In particular
$$|g(x)-f(x)|<\frac{\ve}{4},\; |g(y)-f(y)|<\frac{\ve}{4} \text{ and } |g(t)-c|<\frac{\ve}{4}\text{ for all }t\in I_0.$$
It follows that $|g(y)-g(x)|>\ve/2$. By Lemma \ref{L3} pick an interval $[\alpha,\beta]\subset I_0$ such that
\begin{equation} \label{E6}
\mu_p([\alpha ,\beta])=\mu_p([x,y]) \text{ for each }p\in\left(0,\frac{1}{2}\right).
\end{equation}
Since $g\in W$, we can write
$$g=\sum_{i=1}^k s_iF_{p_i} \text{ where }0<p_1<p_2<\dots <p_k<\frac{1}{2} \text{ with }s_i\neq 0\text{ for }i=1,\dots ,k.$$
Let $g^+=\sum_{s_i>0}s_iF_{p_i}$ and $g^-=\sum_{s_i<0}s_iF_{p_i}$. Then $g=g^+-g^-$ and from (\ref{E6}) it follows that
$$g^+(\beta)-g^+(\alpha)=g^+(y)-g^+(x) \text{ and }g^-(\beta)-g^-(\alpha)=g^-(y)-g^-(x)$$
which implies that $g(\beta)-g(\alpha)=g(y)-g(x)$. Hence
$$\frac{\ve}{2}<|g(y)-g(x)|=|g(\beta)-g(\alpha)|\le|g(\beta)-c|+|c-g(\alpha)|<\frac{\ve}{2}$$
which yields a contradiction.
\end{proof}

{\bf Proof of Theorem \ref{t1}.} From Lemma \ref{L1} it follows that the functions $F_p$, $p\in(0,1/2)$, are linearly independent.
We have defined $W=\on{span}\{F_p\colon p\in(0,1/2)\}$. Observe that $\on{cl}(W)$
is a closed vector subspace of $\on{CBV}$ which is additionally nonseparable by Fact \ref{F1}.
By Lemmas~\ref{L2} and~\ref{L4}, the set $\on{cl}(W)\setminus\{ 0\}$ consists of
strongly singular functions. \hfill $\Box$

\section{Strong $\ce$-algebrability of some sets of singular functions}

We say that a function $f\colon \BB R\to\BB R$ is {\em exponential like (of range $m$)} whenever
\begin{equation} \label{E7}
f(x)=\sum_{i=1}^m a_i e^{\beta _i x},\quad x\in\BB R,
\end{equation}
for some distinct nonzero real numbers $\beta_1,\dots \beta_m$ and some nonzero real numbers $a_1,\dots a_m$.
We will also consider exponential like functions (of the same form) with domain $[0,1]$.

Our general criterion of strong $\ce$-algebrability is the following.
\begin{prop}  \label{p1}
Given a family $\mc F\subset{\BB R}^{[0,1]}$, assume that there exists a function $F\in\mc F$ such that
$f\circ F\in\mc F\setminus\{ 0\}$ for every exponential like
function $f\colon\BB R\to\BB R$. Then $\mc F$ is strongly $\ce$-algebrable. More exactly,
if $H\subset\BB R$ is a set of cardinalty $\ce$, linearly independent over the rationals $\BB Q$,
then $\exp\circ\,(rF)$, $r\in H$, are free generators of an algebra contained in $\mc F\cup\{ 0\}$.
\end{prop}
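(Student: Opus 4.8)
The plan is to take as free generators the functions $x_r=\exp\circ(rF)=e^{rF}$ for $r\in H$, and to exploit the single algebraic identity that a product of exponentials of $F$ is again an exponential of $F$. First I would compute an arbitrary element of $\widehat X$: for $r_1,\dots,r_n\in H$ and nonnegative integers $k_1,\dots,k_n$ not all zero,
$$x_{r_1}^{k_1}x_{r_2}^{k_2}\cdots x_{r_n}^{k_n}=e^{k_1r_1F}\cdots e^{k_nr_nF}=e^{\beta F},\qquad \beta=\sum_{i=1}^n k_ir_i.$$
Thus every monomial from $\widehat X$ has the shape $e^{\beta F}$, with $\beta$ a nonnegative integer combination of members of $H$.

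Next I would use the $\BB Q$-linear independence of $H$ to control these exponents. Since the $k_i$ are nonnegative integers not all zero, $\beta=\sum k_ir_i$ is a nontrivial rational combination of distinct elements of $H$, hence $\beta\neq 0$. Moreover, padding two monomials to a common index set, their exponents coincide only when the corresponding integer coefficients agree termwise; by independence over $\BB Q$ this forces the monomials to be identical. So distinct monomials give distinct nonzero exponents, and an arbitrary nonzero element of the subalgebra generated by $\{x_r:r\in H\}$, being a linear combination of finitely many distinct monomials with nonzero coefficients $a_1,\dots,a_m$, takes the form
$$\sum_{j=1}^m a_j e^{\beta_j F}=f\circ F,\qquad f(x)=\sum_{j=1}^m a_j e^{\beta_j x},$$
where $\beta_1,\dots,\beta_m$ are distinct nonzero reals and $a_1,\dots,a_m$ are nonzero; that is, $f$ is exponential like of range $m$.

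Finally I would invoke the hypothesis on $F$, which does the work for both clauses in the definition of strong $\ce$-algebrability at once. By assumption $f\circ F\in\mc F\setminus\{0\}$ for every exponential like $f$. On one hand this shows that every linear combination of elements of $\widehat X$ lies in $\mc F\cup\{0\}$ (the nonzero ones equal some $f\circ F\in\mc F$, the trivial one is $0$). On the other hand, since each such $f\circ F$ is \emph{nonzero}, no nontrivial linear combination of monomials can vanish, which is precisely the linear independence of $\widehat X$, i.e.\ the freeness of the generators. As $|H|=\ce$, this exhibits $\ce$ free generators of a subalgebra contained in $\mc F\cup\{0\}$.

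I expect the only real obstacle to be the bookkeeping of the middle step, namely checking that the passage from monomials to exponents is injective and avoids $0$; this is exactly where the $\BB Q$-linear independence of $H$ is indispensable. All the analytic content is carried by the hypothesis, whose ``nonzero'' clause conveniently supplies the freeness for free.
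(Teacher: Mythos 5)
Your proposal is correct and follows essentially the same route as the paper: both reduce an arbitrary nonzero polynomial in the generators $e^{rF}$ to a function of the form $f\circ F$ with $f$ exponential like, using the $\BB Q$-linear independence of $H$ to guarantee that the resulting exponents are distinct and nonzero, and then invoke the hypothesis to get membership in $\mc F\setminus\{0\}$ and hence both the containment and the freeness. In fact you make explicit the bookkeeping about exponents that the paper only sketches by reference to another article.
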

\begin{proof}
Fix a set $H$ of cardinalty $\ce$, linearly independent over the rationals $\BB Q$.  By the assumption,
$\exp\circ\,(rF)\in\mc F$ for all $r\in H$. To show
that $\exp\circ\,(rF)$, $r\in H$, are free generators of an algebra contained in $\mc F\cup\{ 0\}$,
consider any $n\in\BB N$ and a non-zero polynomial $P$ in $n$ variables without a constant term.
Then the function given by
\begin{equation} \label{Eq1}
x\mapsto P(e^{r_1 F(x)},e^{r_2 F(x)},\dots ,e^{r_n F(x)}),\quad x\in[0,1],
\end{equation}
is of the form
\begin{equation} \label{Eq2}
\sum_{i=1}^m a_i \left(e^{r_1 F(x)}\right)^{k_{i1}} \left(e^{r_2 F(x)}\right)^{k_{i2}}\dots \left(e^{r_n F(x)}\right)^{k_{in}}
=\sum_{i=1}^m a_i \exp\left(F(x)\sum_{j=1}^n r_j k_{ij}\right)
\end{equation}
where $a_1,\dots a_m$ are nonzero real numbers and the matrix $[k_{ij}]_{i\le m,j\le n}$ has distinct nonzero rows, with $k_{ij}\in\{ 0,1,2,\dots\}$.
Since the function
$t\mapsto\sum_{i=1}^m a_i \exp(t\sum_{j=1}^n r_j k_{ij})$ is exponential like, from (\ref{Eq2}) and the the assumption it follows that the function
(\ref{Eq1}) is in $\mc F\setminus\{ 0\}$. (For technical details concerning the role of the set $H$, compare with \cite{BGP} where a similar technique was used.)
\end{proof}

Note that the functions of type $e^{\beta x}$ were used to show the lineability of various sets of functions, then the generators of
the respective linear subspace were of the form $F(x)e^{\beta x}$ with the respectively chosen function $F$ from the considered set.
(See e.g. \cite{BG2}, \cite{JMS}.) In Proposition~\ref{p1},
instead of multiplication, we use superposition of $e^{\beta x}$ with $F$, in aim to show the (strong) algebrability of the considered set.
This new idea will be used below in Theorems \ref{t2} and \ref{t4}.

\begin{lem} \label{L5}
For every positive integer $n$, any exponential like function $f\colon [0,1]\to \BB R$  of range $m$, and
each $c\in\BB R$, the preimage $f^{-1}[\{c\}]$ has at most $m$ elements. Consequently, $f$ is not constant in
every subinterval of $[0,1]$.
\end{lem}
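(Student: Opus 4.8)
The plan is to reduce the assertion to a classical bound on the number of real zeros of an exponential sum. Writing the exponential like function as $f(x)=\sum_{i=1}^m a_ie^{\beta_i x}$ with the $\beta_i$ distinct and nonzero and the $a_i$ nonzero, I would fix $c\in\R$ and count the solutions of $f(x)=c$, that is, the zeros of
$$g(x)=\sum_{i=1}^m a_ie^{\beta_i x}-c.$$
The cardinality of $f^{-1}[\{c\}]$ is exactly the number of distinct real zeros of $g$, so everything comes down to bounding that number.

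The key auxiliary fact I would isolate is the following: if $\lambda_1<\dots<\lambda_k$ are distinct reals and $c_1,\dots,c_k$ are nonzero, then $h_0(x)=\sum_{i=1}^k c_ie^{\lambda_i x}$ has at most $k-1$ distinct real zeros. I would prove this by induction on $k$ using Rolle's theorem. The case $k=1$ is clear, since $c_1e^{\lambda_1 x}$ never vanishes. For the inductive step, multiply by the everywhere-positive factor $e^{-\lambda_1 x}$, which does not change the zero set, obtaining $h(x)=c_1+\sum_{i=2}^k c_ie^{(\lambda_i-\lambda_1)x}$. Its derivative
$$h'(x)=\sum_{i=2}^k c_i(\lambda_i-\lambda_1)e^{(\lambda_i-\lambda_1)x}$$
is again an exponential sum with $k-1$ distinct nonzero exponents and nonzero coefficients, so by the inductive hypothesis it has at most $k-2$ zeros. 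If $h$ had $N$ distinct zeros, Rolle's theorem would produce a zero of $h'$ strictly between each pair of consecutive ones, forcing $N-1\le k-2$, hence $N\le k-1$, as required.

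Applying this to $g$ gives the first assertion. When $c=0$, the function $g$ is an exponential sum of $m$ terms with distinct nonzero exponents, hence it has at most $m-1$ zeros. When $c\neq0$, the constant term $-c=-c\,e^{0\cdot x}$ contributes an $(m+1)$-st exponent $0$, which is distinct from every $\beta_i$ because all $\beta_i\neq0$; thus $g$ is an exponential sum of $m+1$ terms with distinct exponents and nonzero coefficients, and so it has at most $m$ zeros. In either case $f^{-1}[\{c\}]$ has at most $m$ elements, and the same bound holds for the restriction of $f$ to $[0,1]$, since passing to a subinterval can only decrease the number of zeros.

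The final clause is then immediate: a nondegenerate subinterval of $[0,1]$ is infinite, so it cannot be contained in any preimage $f^{-1}[\{c\}]$, which has at most $m$ points; hence $f$ is not constant on any subinterval. The only substantive step is the Rolle-type induction bounding the zeros of an exponential sum, and this is a standard and routine argument, so I expect no genuine obstacle.
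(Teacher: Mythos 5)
Your proof is correct and follows essentially the same route as the paper's: both arguments induct on the number of exponential terms, factor out one exponential, differentiate to reduce to a sum with one fewer term, and recover the zero count via Rolle's theorem (the paper phrases this as counting intervals of monotonicity, and carries the constant $c$ inside the induction hypothesis rather than absorbing it as a term with exponent $0$, but these are cosmetic differences). No gaps.
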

\begin{proof}
We proceed by induction.
If $m=1$, the function $f$ is of the form $f(x)=a e^{\beta x}$, $x\in [0,1]$, with $a\neq 0$ and $\beta\neq 0$. So $f$ is stricly monotone
and the property is obvious.

Assume that the property holds for all exponential like functions of range $m$.
Let $f(x)=\sum_{i=1}^{m+1} a_i e^{\beta _i x}$, $x\in[0,1]$,
for some distinct nonzero real numbers $\beta_1,\dots \beta_{m+1}$ and some nonzero real numbers $a_1,\dots a_{m+1}$.
Consider the derivative
$$f'(x)=\sum_{i=1}^{m+1} \beta_i a_i e^{\beta _i x}= e^{\beta_1 x}\left(\beta_1 a_1+\sum_{i=2}^{m+1} \beta_i a_i e^{(\beta _i-\beta_1) x}\right),
\quad x\in[0,1].$$
Note that $\gamma_i=\beta_i-\beta_1$, for $i=2,\dots ,m+1$, are nonzero distinct real numbers. So, we may apply the induction hypothesis
to $g(x)=\sum_{i=2}^{m+1} \beta_i a_i e^{\gamma_i x}$, $x\in [0,1]$, and $c=-\beta_1 a_1$. This shows that $(f')^{-1}[\{ 0\}]$
has at most $m$ elements. Hence $f$ has at most $m+2$ local extrema on $[0,1]$ (we should take into account one-sided extrema at $0$ and $1$ where
maybe $f'$ does not vanish). This implies that for each $c\in\BB R$, the preimage $f^{-1}[\{c\}]$ has at most $m+1$ elements, as desired.
\end{proof}

\begin{thm} \label{t2}
The set of strongly singular functions in $\on{CBV}$ is strongly $\ce$-algebrable.
\end{thm}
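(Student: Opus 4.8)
The plan is to apply Proposition~\ref{p1} to the family $\mc F$ of strongly singular functions in $\on{CBV}$, using as the witness the function $F=F_p$ for an arbitrarily fixed $p\in(0,1/2)$. As noted in Section~1, each $F_p$ is itself strongly singular, so $F_p\in\mc F$. It then remains only to check that $f\circ F_p\in\mc F\setminus\{0\}$ for every exponential like function $f\colon\BB R\to\BB R$; once this is verified, Proposition~\ref{p1} immediately yields that $\exp\circ\,(rF_p)$, $r\in H$, freely generate an algebra of strongly singular functions, for any $H\subset\BB R$ of cardinality $\ce$ linearly independent over $\BB Q$.

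First I would verify that $f\circ F_p\in\on{CBV}$. Continuity is immediate, since both $f$ and $F_p$ are continuous. For bounded variation, note that $f$, being a finite linear combination of exponentials, is $C^1$ and hence Lipschitz on the compact range $[0,1]$ of $F_p$, say with constant $L$; the standard estimate $\Var(f\circ F_p)\le L\,\Var(F_p)$ then gives $f\circ F_p\in\on{BV}$, and together with continuity, $f\circ F_p\in\on{CBV}$.

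Next I would show that $f\circ F_p$ is strongly singular. Since $F_p'=0$ at $\lambda$-almost every point (property (iii)) and $f$ is everywhere differentiable, the chain rule applies at each point where $F_p'$ exists and vanishes, giving $(f\circ F_p)'(x)=f'(F_p(x))\,F_p'(x)=0$ for $\lambda$-almost all $x$. For non-constancy on subintervals, I would use that $F_p$ is continuous and strictly increasing (property (ii)), so that it maps any subinterval $[a,b]\subset[0,1]$ onto a nondegenerate subinterval $[F_p(a),F_p(b)]$ of $[0,1]$; by Lemma~\ref{L5} the exponential like function $f$ is not constant on $[F_p(a),F_p(b)]$, whence $f\circ F_p$ is not constant on $[a,b]$. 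In particular $f\circ F_p$ is not the zero function, so $f\circ F_p\in\mc F\setminus\{0\}$, and the assertion then follows from Proposition~\ref{p1}.

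The main obstacle is the almost-everywhere chain-rule step: one must justify $(f\circ F_p)'(x)=0$ at points where $F_p'(x)=0$, even though $F_p$ is differentiable only almost everywhere. This works precisely because the derivative being computed vanishes: writing $\Delta=F_p(x+h)-F_p(x)$, one has $f(F_p(x+h))-f(F_p(x))=f'(F_p(x))\,\Delta+o(\Delta)$, and dividing by $h$ gives a quantity tending to $f'(F_p(x))\cdot 0=0$, using $\Delta/h\to F_p'(x)=0$ together with $o(\Delta)/h\to 0$ (and continuity of $F_p$ to ensure $\Delta\to 0$ as $h\to 0$). The bounded-variation estimate for composition with a Lipschitz map is the other point to state with care, but it is routine.
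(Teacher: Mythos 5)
Your proposal is correct and follows essentially the same route as the paper's proof: apply Proposition~\ref{p1} with $F=F_p$, use $F_p'=0$ a.e.\ and the chain rule to get $(f\circ F_p)'=0$ a.e., and use Lemma~\ref{L5} together with the strict monotonicity of $F_p$ to rule out constancy on subintervals. Your extra care with the almost-everywhere chain rule and the bounded-variation check are fine (indeed the standard chain rule already applies at every point where $F_p$ is differentiable, since $f$ is differentiable everywhere), so there is nothing to add.
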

\begin{proof}
Fix a strongly singular function $F\in\on{CBV}$. For instance, let $F$ be the distribution function $F_{1/4}$ considered in the previous sections.
It suffices to check that the assumption of Proposition~\ref{p1} is valid with $\mc F$ equal to the set of strongly singular functions.
Consider an exponent like function $f$ given by (\ref{E7}),
for some distinct nonzero real numbers $\beta_1,\dots \beta_m$ and some nonzero real numbers $a_1,\dots a_m$.
Since $F'=0$ almost everywhere in $[0,1]$, we have
$$(f\circ\,F)'(x)=F'(x)\sum_{i=1}^m a_i\beta_ie^{\beta_i F(x)}=0\text{ for almost all }x\in[0,1].$$
Suppose that $f\circ\,F$ is constant in some subinterval $[c,d]$ of $[0,1]$ with $c<d$.
Since $F^{-1}$ is a continuous increasing bijection from $[0,1]$ onto $[0,1]$, the function $f=(f\circ\, F)\circ\,F^{-1}$
is constant in the interval $[F(c),F(d)]$ which contradicts Lemma \ref{L5}.
\end{proof}

Note that $\ce$ is the largest among the cardinalities $\kappa$ which can yield $\kappa$-algebrability of strongly singular functions.
By property (III) of strongly singular functions, our Theorem \ref{t2} implies the recent result \cite[Thm 2.1]{JMS}
stating that the set of continuous functions on $[0,1]$, which are a.e. differentiable, with a.e. bounded derivative
and are not Lipschitz, is $\ce$-lineable.

By virtue of Lemma \ref{L2}, the set of strongly singular functions in $\on{CBV}$ cannot be densely algebrable.
On the other hand, the set of strongly singular functions can be considered a subset of the Banach algebra $C[0,1]$ of continuous
functions from $[0,1]$ to $\BB R$, with the supremum norm, and we have the following result.

\begin{thm} \label{t3}
The set of strongly singular functions is a densely strongly $\ce$-algebrable subset of $C[0,1]$.
\end{thm}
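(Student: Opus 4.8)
The plan is to reuse the algebra already produced for Theorem~\ref{t2} and to establish its density by the Stone--Weierstrass theorem. First I would fix the same data as in the proof of Theorem~\ref{t2}: the strictly increasing strongly singular function $F=F_{1/4}$ and a set $H\subset\BB R$ of cardinality $\ce$ that is linearly independent over $\BB Q$. By Proposition~\ref{p1} together with the verification carried out in Theorem~\ref{t2}, the functions $\exp\circ\,(rF)$, $r\in H$, are free generators of a subalgebra $\mc B$ with $\mc B\subset E\cup\{0\}$, where $E$ denotes the set of strongly singular functions. Since $F\in\on{CBV}\subset C[0,1]$ and $\exp$ is continuous, each generator lies in $C[0,1]$; moreover the free generation asserted in Proposition~\ref{p1} is a purely algebraic statement, so it remains valid when $\mc B$ is regarded as a subalgebra of $C[0,1]$. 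Thus $E$ is strongly $\ce$-algebrable as a subset of $C[0,1]$, witnessed by the free algebra $\mc B$.

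It then remains to prove that $\mc B$ is dense in $(C[0,1],\|\cdot\|_\infty)$. Here I would invoke the real Stone--Weierstrass theorem in its form for subalgebras that need not contain the constants: a subalgebra of $C[0,1]$ is dense provided it separates the points of $[0,1]$ and vanishes at no point. To check separation, pick any $r\in H$; since $H$ is linearly independent over $\BB Q$ we have $r\neq 0$, and because $F$ is strictly increasing and $\exp$ is strictly increasing, the generator $\exp\circ\,(rF)$ is strictly monotone, hence injective, and already separates points. For the non-vanishing condition, note that $\exp(rF(x))>0$ for every $x\in[0,1]$. One should also record that $\mc B$ is genuinely a subalgebra of $C[0,1]$: it is closed under addition, scalar multiplication and products, the last because a product of polynomials without constant term again has no constant term. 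With these verifications Stone--Weierstrass yields $\on{cl}(\mc B)=C[0,1]$.

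The argument is short, so the only delicate point is organizational rather than computational: one must use the version of Stone--Weierstrass valid for non-unital subalgebras, since $\mc B$ contains no nonzero constant function. It is worth contrasting this with the situation in $\on{CBV}$, where Lemma~\ref{L2} forces the closure of any algebra of singular functions to remain inside the singular functions and hence rules out density; passing to the weaker supremum norm is precisely what makes density possible here.
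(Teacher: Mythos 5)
Your argument is correct, and for the density step it takes a genuinely different route from the paper. The paper arranges matters so that the \emph{unital} Stone--Weierstrass theorem applies: it additionally assumes that $H$ contains the terms of a sequence $(r_n)$ converging to $0$, observes that $c\exp\circ\,(r_nF)\to c$ uniformly, so that $\on{cl}(\mc A)$ contains all constants, and then passes to the algebra $\mc A^*$ generated by $\mc A$ and the constants, which has the same closure and is dense by the classical form of the theorem. You instead invoke the non-unital version of Stone--Weierstrass (a closed point-separating subalgebra of $C(X)$ for compact Hausdorff $X$ is either all of $C(X)$ or the ideal of functions vanishing at a single point), and verify the two hypotheses directly: any single generator $\exp\circ\,(rF)$ with $r\neq 0$ is injective because $F=F_{1/4}$ is strictly increasing, so the algebra separates points, and it vanishes nowhere because $\exp(rF(x))>0$. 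Your version has the small advantage of working for an arbitrary $H$ of cardinality $\ce$ linearly independent over $\BB Q$, with no extra condition on $H$; the paper's version trades that for reliance only on the more elementary unital statement. Your closing remark correctly identifies why Lemma~\ref{L2} blocks dense algebrability in $\on{CBV}$ and why the supremum norm is the right setting, which matches the paper's own commentary preceding the theorem.
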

\begin{proof}
In the proof of the previous theorem, by the use of Proposition \ref{p1}, we have obtained a free algebra $\mc A$ contained
in the $\mc F\cup\{ 0\}$ where $\mc F$ is the set of strongly singular functions. According to Proposition \ref{p1}, $\exp\circ\, (rF)$, $r\in H$,
are free generators of $\mc A$. Now, we additionally assume that $H$ contains the terms of a sequence $(r_n)_{n\ge 1}$ convergent to $0$.
Thanks to this, the closure $\on{cl}(\mc A)$ of $\mc A$ in $C[0,1]$ contains all constant functions since if $c\in\BB R$ then the sequence
$(c\exp\circ\, (r_nF))_{n\ge 1}$ converges uniformly to $c$. Let $\mc A^*$ be the algebra generated by the constant functions and the functions from $\mc A$.
Then $\on{cl}(\mc A^*)=\on{cl}(\mc A)$. By the Stone-Weierstrass theorem we have $\on{cl}(\mc A^*)=C[0,1]$, so $\mc A$ is dense in $C[0,1]$,
as desired.
\end{proof}

In the proof of the next theorem, the following elementary lemma will be needed. Every continuous function defined
on an interval will be treated as $0$ times differentiable.

\begin{lem} \label{L6}
Given functions $f$ and $g$ from $(a,b)$ into $\BB R$, and an integer $n\ge 1$, assume that $f$ and $fg$ are $n$ times differentiable,
$g$ is $n-1$ times differentiable and $f$ does not vanish in $(a,b)$. Then $g$ is $n$ times differentiable.
\end{lem}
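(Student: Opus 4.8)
The plan is to proceed by induction on $n$. For the base case $n=1$, the hypotheses reduce to: $f$ and $fg$ are differentiable and $f$ never vanishes. Then the quotient rule applied to the identity $g = (fg)/f$ immediately yields that $g$ is differentiable. (The hypothesis that $g$ is $0$ times differentiable, i.e.\ merely continuous, is automatic here, since $g=(fg)/f$ is a quotient of continuous functions, but it is harmless to assume.)

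For the inductive step, I would assume the statement for $n$ and then suppose that $f$ and $fg$ are $(n+1)$ times differentiable, that $g$ is $n$ times differentiable, and that $f$ does not vanish. Because $n\ge 1$, the derivative $g'$ exists, and the ordinary product rule $(fg)'=f'g+fg'$ lets me isolate
$$f g' = (fg)' - f' g.$$
The right-hand side is a difference of two functions that are each $n$ times differentiable: $(fg)'$ is $n$ times differentiable since $fg$ is $(n+1)$ times differentiable, while $f'g$ is a product of the $n$ times differentiable functions $f'$ (from $f$ being $(n+1)$ times differentiable) and $g$. Hence $fg'$ is $n$ times differentiable.

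Now I would apply the inductive hypothesis to the pair $(f,g')$ in place of $(f,g)$. Indeed $f$ is $n$ times differentiable, $fg'$ is $n$ times differentiable by the previous step, $g'$ is $n-1$ times differentiable because $g$ is $n$ times differentiable, and $f$ does not vanish. The inductive hypothesis therefore delivers that $g'$ is $n$ times differentiable, which is exactly the assertion that $g$ is $(n+1)$ times differentiable, completing the induction.

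The point demanding the most care — and the reason a direct appeal to the Leibniz formula for $(fg)^{(n)}$ is not available — is that one may not assume at the outset that $g^{(n)}$ exists, as that is precisely what has to be established. Reducing to the first-order identity $fg'=(fg)'-f'g$ lowers the order of differentiation by exactly one and simultaneously clarifies why the hypothesis that $g$ is $n-1$ times differentiable is the right one: it is exactly what is needed so that $g'$ satisfies the hypotheses of the case $n$, allowing the induction to close. (One could alternatively argue in a single step by noting that $1/f$ is $n$ times differentiable and writing $g=(fg)\cdot(1/f)$, but that route obscures the role played by the differentiability hypothesis on $g$, whereas the inductive argument above uses it transparently.)
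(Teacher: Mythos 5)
Your proof is correct and follows essentially the same route as the paper: the base case via $g=(fg)/f$, and the inductive step via isolating $fg'=(fg)'-f'g$ and applying the induction hypothesis to the pair $(f,g')$. The paper's version is just a more terse rendering of the identical argument.
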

\begin{proof}
We use induction. The case if $n=1$ is clear since $g=(fg)/f$. Assume that the property is valid for a fixed $n\ge 1$.
Suppose that $f$ and $fg$ are $n+1$ times differentiable,
$g$ is $n$ times differentiable and $f$ does not vanish in $(a,b)$. Hence $(fg)'=f'g+g'f$  is $n$ times differentiable,
and so is $f'g$. This implies that $g'f$ is $n$ times differentiable. By the induction hypothesis, $g'$ is $n$ times
differentiable and the proof is finished.
\end{proof}

By $C_n$ we denote the algebra of functions from $[0,1]$ into $\BB R$ that have $n$ continuous derivatives in $[0,1]$.

\begin{thm} \label{t4}
Given $n\ge 1$, the set of functions of class $C_n$ that do not have derivative of order $n+1$ somewhere
in any open subinterval of $[0,1]$ is strongly $\ce$-algebrable.
\end{thm}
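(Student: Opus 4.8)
The plan is to apply Proposition \ref{p1} with $\mc F$ equal to the set in question, so the task reduces to producing a single function $F\in C_n$ for which $f\circ F\in\mc F\setminus\{0\}$ whenever $f$ is exponential like. I would build $F$ from a strongly singular function. Fix a strongly singular $G\in\on{CBV}$ (for instance $G=F_{1/4}$) and let $F$ be an $n$-fold antiderivative of $G$, say $F(x)=\int_0^x\!\int_0^{t_1}\!\cdots\!\int_0^{t_{n-1}}G(t_n)\,dt_n\cdots dt_1$. Since $G$ is continuous, each integration raises the degree of smoothness by one, so $F\in C_n$ and $F^{(n)}=G$. Moreover $F$ is not constant on any subinterval: if it were constant on $(c,d)$ then $G=F^{(n)}$ would vanish identically there, contradicting that a strongly singular function is nonconstant on every interval.

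Now fix an exponential like $f$ as in (\ref{E7}). Since $f$ is entire and $F\in C_n$, the composition $f\circ F$ is again of class $C_n$, and by the Fa\`a di Bruno formula its $n$-th derivative has the shape
\[
(f\circ F)^{(n)}=(f'\circ F)\cdot F^{(n)}+R=(f'\circ F)\cdot G+R,
\]
where $R$ collects all remaining monomials. The key point is that $F^{(n)}=G$ occurs only in the displayed leading term (the one coming from the single-block partition), so every monomial in $R$ involves only the derivatives $F',\dots,F^{(n-1)}$, which are of class $C^1$ because $F\in C_n$, multiplied by factors $f^{(k)}(F)$, each differentiable since $F$ is. Hence $R$ is differentiable on $(0,1)$, and all the trouble is concentrated in the factor $(f'\circ F)\cdot G$. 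Next I would control the zeros of $f'\circ F$: the derivative $f'$ is itself exponential like, so by Lemma \ref{L5} (whose argument is insensitive to the choice of interval) it has only finitely many zeros on the compact set $F([0,1])$; combined with the fact that $F$ is nonconstant on every subinterval, so that its image on any subinterval is a nondegenerate interval, this shows that $\{x:\,f'(F(x))=0\}$ has empty interior.

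To verify $f\circ F\in\mc F$, take any open subinterval $(c,d)$ and, using the previous step, pass to a subinterval $(c',d')\subset(c,d)$ on which $\phi:=f'\circ F$ is nowhere zero; note $\phi$ is of class $C^1$. Suppose toward a contradiction that $(f\circ F)^{(n+1)}$ existed throughout $(c',d')$. Then $(f\circ F)^{(n)}$ would be differentiable there, and since $R$ is differentiable so would be $\phi\cdot G$. Applying Lemma \ref{L6} with $n=1$ to the pair $\phi$ and $\phi G$ (here $\phi$ is $C^1$ and nonvanishing, and $G$ is continuous) would force $G$ to be differentiable on $(c',d')$, contradicting the strong singularity of $G$ (property (I)). Thus $(f\circ F)^{(n+1)}$ fails to exist somewhere in every open subinterval, so $f\circ F\in\mc F$; as $\mc F$ contains no function possessing an $(n+1)$-th derivative on a subinterval, it contains no constants, and in particular $f\circ F\neq 0$. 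Proposition \ref{p1} then yields strong $\ce$-algebrability. I expect the main obstacle to be the bookkeeping in the Fa\`a di Bruno step, namely cleanly isolating the single leading term $(f'\circ F)G$ and certifying that the entire remainder $R$ is differentiable, together with the verification that $f'\circ F$ cannot vanish on a whole subinterval; once these are in place everything reduces, exactly as in Theorem \ref{t2}, to dividing out by a nonvanishing $C^1$ factor via Lemma \ref{L6}.
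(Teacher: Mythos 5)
Your argument is correct and follows the paper's strategy in all essentials: Proposition \ref{p1} applied to the $n$-fold antiderivative of a strongly singular function, shrinking the given subinterval so that $f'\circ F$ has no zeros on it, and then dividing out this nonvanishing factor by means of Lemma \ref{L6}. The one place where you take a genuinely different (and more laborious) route is the reduction step: you expand $(f\circ F)^{(n)}$ by the Fa\`a di Bruno formula, isolate the leading term $(f'\circ F)\cdot F^{(n)}$, verify that the remainder $R$ is differentiable, and then invoke only the base case of Lemma \ref{L6}. The paper instead applies Lemma \ref{L6} with its full inductive strength directly to the first-order identity $(f\circ F)'=(f'\circ F)\cdot F'$: if $f\circ F$ were $n+1$ times differentiable on the shrunken interval, then $(f'\circ F)\cdot F'$ would be $n$ times differentiable there, while $f'\circ F$ is $n$ times differentiable and nonvanishing and $F'$ is $n-1$ times differentiable, so the lemma forces $F'$ to be $n$ times differentiable, contradicting the construction of $F$. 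In effect the induction built into Lemma \ref{L6} performs exactly the bookkeeping you flag as ``the main obstacle,'' so the Fa\`a di Bruno computation can be dispensed with entirely. Both versions are sound; yours lands the contradiction one level lower (differentiability of the strongly singular $G$ itself, against property (I)), while the paper's lands it at the level of the nonexistent $F^{(n+1)}$, and your auxiliary steps (nonvanishing of $F$ on subintervals, finiteness of the zero set of $f'$ via Lemma \ref{L5}, exclusion of the zero function) all check out.
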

\begin{proof}
Consider $F=F_{1/4}$, the strongly singular function used before. We will use it to construct an increasing function $G\colon [0,1]\to\BB R$
of class $C_n$ which does not have derivative of order $n+1$ somewhere
in any open subinterval of $[0,1]$. For $n=1$, $G(x)=\int_0^x F$, $x\in [0,1]$, is good. Having the respective function $\widetilde{G}$ for
a number $n$, we define $G(x)=\int_0^x\widetilde{G}$, $x\in [0,1]$, which is good for $n+1$.

We will apply Proposition \ref{p1} with the set $\mc F$ of functions in class $C_n$ that do not have derivative of order $n+1$ somewhere
in any open subinterval of $[0,1]$. Namely, we will check that
$f\circ\, G\in\mc F\setminus\{ 0\}$ for every exponential like
function $f\colon\BB R\to\BB R$. Fix an exponential like
function $f$ given by (\ref{E7}). Note that the functions $f$ and $f'$ are infinitely differentiable
and, by Lemma \ref{L6}, they have finitely many zeros in any bounded interval. Clearly, $f\circ\, G\in C_n$.
Fix an interval $(a,b)\subset [0,1]$. We will show that $f\circ\, G$ does not have derivative of order $n+1$ somewhere
in $(a,b)$. We may assume that $f$ and $f'$ have no zeros in $(a,b)$. Suppose that $f\circ\, G$ is $n+1$ times differentiable
in $(a,b)$. Since
$$(f\circ\, G)'(x)=(f'\circ\, G)(x)\cdot G'(x),\quad x\in (a,b),$$
$f'\circ\, G$ is $n$ times differentiable in (a,b) and it does not vanish in $(a,b)$,
we infer by Lemma \ref{L6} that $G'$ is $n$ times differentiable in $(a,b)$.
This is a contradiction.
\end{proof}

\noindent
{\bf Acknowledgements.} The first author would like to thank Luis Bernal-Gonz\'alez for his useful comments \cite{BG3} on articles \cite{BG1}, \cite{BG2}
that have been inspiration for our research.
The work of the first and of the second authors has been supported by the (Polish) National Center of Science Grant No. N N201 414939 (2010-2013).

\end{document}